\theoremstyle{plain}
\newtheorem{theorem}{Theorem}
\newtheorem{lemma}{Lemma}
\theoremstyle{proof}
\theoremstyle{definition}
\theoremstyle{remark}
\numberwithin{equation}{section}
\numberwithin{lemma}{section}
\numberwithin{theorem}{section}
\theoremstyle{thmrm}
\author{S. Banerjee}
\address{Soumyarup Banerjee,  Harish-Chandra Research Institute, HBNI,
Chhatnag Road, Jhunsi,  Allahabad 211 019,  India}
\email{ soumyabanerjee@hri.res.in}
\title[Signs Of Fourier Coefficients Of two Cusp Forms]{A Note On Signs Of Fourier Coefficients Of two Cusp Forms}
\keywords{Sign changes, Fourier coefficients, Modular forms, Rankin-Selberg L-function}
\subjclass[2010]{Primary: 11F30 , Secondary: 11M06}
\begin{document}

\maketitle

\begin{abstract}
Kohnen and Sengupta proved that two cusp forms of different integral weights with real algebraic Fourier coefficients have infinitely many Fourier coefficients of the same as well as of opposite sign, up to the action of a Galois automorphism. Recently Gun, Kohnen and Rath strengthen their result by comparing the simultaneous sign changes of Fourier coefficients of two cusp forms with arbitrary real Fourier coefficients. The simultaneous sign changes of Fourier coefficients of two same integral weight cusp forms follow from an earlier work of Ram Murty. In this note we compare the signs of the Fourier coefficients of two cusp forms simultaneously for the congruence subgroup $\Gamma_0(\mathit{N})$ where the coefficients lie in an arithmetic progression. Next we consider an analogous question for the particular sparse sequences of Fourier coefficients of normalized Hecke eigen cusp forms for the full modular group.
\end{abstract}

\section{Introduction}
Throughout the paper, let $z\in \mathfrak{H}$ be an element of the Poincar\'e upper-half plane and $q=e^{2\pi iz}$. Let $S_k$ denote the space of cusp forms of weight k for full modular group $SL_2(\mathbb{Z})$ and $S_k(N)$ denote the space of cusp forms of weight $k$ for the congruence subgroup $\Gamma_0(N)$.

Sign changes of Fourier coefficients of cusp forms in one or in several variables have been studied in various aspects by many authors. It is known that, if the Fourier coefficients of a cusp form are real then they change signs infinitely often \cite{Koh-1}.  Further, many quantitative results for the number of sign changes for the sequence of the Fourier coefficients have been established. The sign changes of the subsequence of the Fourier coefficients at prime numbers was first studied by M. R.  Murty \cite{Ram}. Later, J. Meher et. al.  \cite{Jab} investigated that for a normalized Hecke eigen cusp form with Fourier coefficients $\{a(n)\}$, the subsequences $\{a(n^{j})\}_{n\geq1} ~(j=2,3,4)$ of the Fourier coefficients change signs infinitely often. W.Kohnen and Y.Martin \cite{Koh-2}  generalized their result by showing the subsequence $\{a(p^{jn})\}_{n\geq0}$  have infinitely many sign changes for almost all primes $p$ and $ j\in\mathbb{N}$ .

Recently in 2009, the question of simultaneous sign change of Fourier coefficients of two cusp forms of different integral weights with real algebraic Fourier coefficients was considered by W. Kohnen and J. Sengupta. They showed that for $k_1 \neq k_2$ if $f$ and $g$ is in $S_{k_i}(\Gamma_0(\mathbb{N}))$ for $i=1,2$ with totally real algebraic Fourier coefficients $\{a(n)\}$ and $\{b(n)\}$ respectively for $n\geq 1$ with $a(1)=1=b(1)$, then there exist an element $\sigma$ of the absolute Galois group $Gal(\mathbb{\overline{Q}}/\mathbb{Q})$ such that $a(n)^{\sigma}b(n)^{\sigma}<0$ for infinitely many $n$. Later Gun, Kohnen and Rath extended their result by showing that if $f$ and $g$ have real Fourier coefficients $\{a(n)\}$ and $\{b(n)\}$ respectively for $n\geq 1$ and $a(1)b(1)\neq 0$, then there exist infinitely many $n$ such that $a(n)b(n)>0$ and infinitely many $n$ such that $a(n)b(n)<0$. When $k_1=k_2$, the simultaneous sign changes of Fourier coefficients of $f$ and $g$ follows from an earlier work of Ram Murty \cite{Ram}.

In this article, we have considered two normalized Hecke eigen cusp forms $f$ and $g$ of weight $1<k_2<k_1$ for $\Gamma_0(\mathbb{N})$ with Fourier coefficients $\{a(n)\}$ and $\{b(n)\}$ respectively and have studied the simultaneous sign changes of Fourier coefficients where the coefficients are in arithmetic progression. We have also studied the simultaneous sign changes of the sparse sequences $\{a(n^j)\}$ and $\{b(n^j)\}$ for $j= 2, 3, 4$ respectively of $f$ and $g$ over full modular group $SL_2(\mathbb{Z})$. 
%
%


\section{Statement of the results}
The following two theorems are the main results alluded off earlier.

\begin{theorem}\label{thm2}
Let $f, g \in S_{k_i} (N)$ (with $i = 1,2$ respectively) having Fourier coefficients $a(n)$ and $b(n)$ respectively which are normalized, i.e. $a(1) = b(1) =1$. Then for any $m\in \mathbb{N}$ and $l \in \mathbb{Z}$ with $(l,m)=1$, the subsequences $(a(n))_{n\equiv l \pmod m}$ and $(b(n))_{n\equiv l \pmod m}$ have infinitely many terms of same as well as of different signs simultaneously.  
\end{theorem}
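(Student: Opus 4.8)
The plan is to run the Rankin--Selberg--Landau argument of Kohnen--Sengupta and Gun--Kohnen--Rath, with Dirichlet characters inserted to single out the progression. Writing $\varphi$ for Euler's function and using the orthogonality relation $\mathbf{1}_{n\equiv l\,(m)}=\varphi(m)^{-1}\sum_{\chi\bmod m}\overline{\chi(l)}\,\chi(n)$ (legitimate since $(l,m)=1$), one has, for $\Re s$ sufficiently large,
\[
D(s):=\sum_{\substack{n\ge 1\\ n\equiv l\,(m)}}a(n)b(n)\,n^{-s}=\frac{1}{\varphi(m)}\sum_{\chi\bmod m}\overline{\chi(l)}\sum_{n\ge 1}\chi(n)a(n)b(n)\,n^{-s}.
\]
Up to finitely many local factors, each inner sum is the quotient of the Rankin--Selberg $L$-function $L(f\times(g\otimes\chi),s)$ by the $\zeta$-type factor occurring in the Rankin--Selberg identity. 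Since $k_1\neq k_2$, the eigenform $f$ is not a Dirichlet twist of $g$, so every numerator $L(f\times(g\otimes\chi),s)$ is entire, while the possible poles coming from the zeros of the denominator all lie in $\Re s<\tfrac{k_1+k_2}{2}-\tfrac12$; hence each inner sum, and therefore $D(s)$, continues holomorphically to $\Re s>\tfrac{k_1+k_2}{2}-\tfrac12$. Note that $D(s)$ has real Dirichlet coefficients.

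Suppose one of the two assertions fails; after possibly replacing $a(n)b(n)$ by $-a(n)b(n)$ we may assume $a(n)b(n)\le 0$ for all $n\equiv l\,(m)$ with $n\ge n_0$. Then $\sum_{n\ge n_0,\,n\equiv l\,(m)}(-a(n)b(n))\,n^{-s}$ equals $-D(s)$ plus a Dirichlet polynomial; it has non-negative coefficients and is holomorphic in $\Re s>\tfrac{k_1+k_2}{2}-\tfrac12$, so by Landau's theorem its abscissa of convergence is at most $\tfrac{k_1+k_2}{2}-\tfrac12$, whence
\[
\sum_{\substack{n\le X\\ n\equiv l\,(m)}}|a(n)b(n)|\ \ll_\sigma\ X^{\sigma}\qquad\text{for every }\sigma>\tfrac{k_1+k_2}{2}-\tfrac12 .
\]
On the other hand, the ``diagonal'' series $\sum_{n\equiv l\,(m)}a(n)^2b(n)^2\,n^{-s}$ has non-negative coefficients and a pole at $s=k_1+k_2-1$ with positive residue, so $\sum_{n\le X,\ n\equiv l\,(m)}a(n)^2b(n)^2\gg X^{k_1+k_2-1}$; combining this with Deligne's bound $|a(n)b(n)|\ll n^{(k_1+k_2)/2-1+\varepsilon}$ via
\[
\sum_{\substack{n\le X\\ n\equiv l\,(m)}}|a(n)b(n)|\ \ge\ \frac{\displaystyle\sum_{n\le X,\ n\equiv l\,(m)}a(n)^2b(n)^2}{\displaystyle\max_{n\le X}|a(n)b(n)|}
\]
gives $\sum_{n\le X,\ n\equiv l\,(m)}|a(n)b(n)|\gg X^{(k_1+k_2)/2-\varepsilon}$, which for small $\varepsilon$ contradicts the previous bound (choose $\tfrac{k_1+k_2}{2}-\tfrac12<\sigma<\tfrac{k_1+k_2}{2}$). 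The other assertion follows on repeating the argument with $-a(n)b(n)$.

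The one substantial ingredient — and where I expect the real work to lie — is the lower bound $\sum_{n\le X,\ n\equiv l\,(m)}a(n)^2b(n)^2\gg X^{k_1+k_2-1}$, equivalently that the diagonal series above genuinely has a pole at $s=k_1+k_2-1$. By orthogonality this reduces to the Rankin--Selberg theory of the symmetric squares: the principal character produces a simple pole at $k_1+k_2-1$ from the trivial constituents of $\mathrm{sym}^2 f$ and $\mathrm{sym}^2 g$, and one must verify that this survives the averaging over characters modulo $m$. Carrying this out uses the holomorphy of $L(\mathrm{sym}^2 f,s)$ and $L(\mathrm{sym}^2 g,s)$ (this is where $k_i>1$ enters), the fact that $\mathrm{sym}^2 f\not\cong\mathrm{sym}^2 g$ (which follows from $k_1\neq k_2$), and a mild non-degeneracy of the progression relative to the forms (the residue would be killed, for instance, if $f$ and $g$ shared a complex multiplication whose nebentypus is trivial on the class of $l$). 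A more self-contained substitute for this step is a prime-number-theorem argument: non-vanishing on $\Re s=1$ of the twisted $L$-functions $L(\mathrm{sym}^2 f,s)$, $L(\mathrm{sym}^2 g,s)$ and $L(\mathrm{sym}^2 f\times\mathrm{sym}^2 g,s)$ forces $|a(p)b(p)|\gg p^{(k_1+k_2)/2-1}$ for a positive proportion of primes $p\equiv l\,(m)$, which already yields $\sum_{n\le X,\ n\equiv l\,(m)}|a(n)b(n)|\gg X^{(k_1+k_2)/2}/\log X$.
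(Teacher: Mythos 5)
Your first half is sound: the orthogonality decomposition, the holomorphy of the twisted Rankin--Selberg quotients in $\Re s>\frac{k_1+k_2}{2}-\frac12$ (using $k_1\neq k_2$ so the numerators are entire), and the Landau step yielding $\sum_{n\le X,\, n\equiv l\ (m)}|a(n)b(n)|\ll_\sigma X^{\sigma}$ for every $\sigma>\frac{k_1+k_2}{2}-\frac12$ are all correct. The genuine gap is the second half, which your quantitative framing cannot do without: the lower bound $\sum_{n\le X,\, n\equiv l\ (m)}a(n)^2b(n)^2\gg X^{k_1+k_2-1}$ is only sketched, making it rigorous would require the automorphy of $\mathrm{sym}^2 f$ and $\mathrm{sym}^2 g$ and $GL(3)\times GL(3)$ Rankin--Selberg theory twisted by characters mod $m$, and --- worse --- the bound is false in general. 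If $f$ has CM by $\mathbb{Q}(i)$ (such eigenforms with $a(1)=1$ exist in even weights $\ge 2$ for suitable level), then $a(n)=0$ for every $n\equiv 3\pmod 4$, because such an $n$ must be divisible by some inert prime $p\equiv 3\pmod 4$ to an odd power; taking $m=4$, $l=3$, your diagonal sum vanishes identically and no contradiction is reached. So your parenthetical ``non-degeneracy'' caveat is not a technicality: without an extra nonvanishing hypothesis the endgame collapses. (This example in fact also stresses the theorem as stated; the paper's own proof tacitly uses $e(1)\neq 0$, which amounts to $l\equiv 1\pmod m$. But the point here is that your route founders on a step the paper never needs.)

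The paper avoids any lower bound entirely. It twists $f$ and $g$ by the characters mod $m$ to obtain cusp forms $f_1,g_1\in S_{k_i}(\Gamma_1(Nm^2))$ with coefficients $I_l(n)a(n)$ and $I_l(n)b(n)$, and uses that the completed Rankin--Selberg function $(2\pi)^{-2s}\Gamma(s)\Gamma(s-k_2+1)\zeta_{Nm^2}(2s-k_1-k_2+2)R_{f_1,g_1}(s)$ is entire because $k_1\neq k_2$. Assuming the products $I_l(n)a(n)b(n)$ have constant sign for all but finitely many $n$, Landau's lemma applied to the entire function $\sum_n e(n)n^{-s}:=\zeta_{Nm^2}(2s-k_1-k_2+2)R_{f_1,g_1}(s)$ shows this Dirichlet series converges everywhere; the poles of the Gamma factors at $s=0,-1,-2,\dots$ then force $\sum_n e(n)n^{j}=0$ for all $j\ge 0$, and a Vandermonde argument gives $e(n)=0$ for all $n$, contradicting $e(1)=1$. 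This is purely qualitative: no asymptotic count, no symmetric squares, no Deligne bound. If you wish to keep your quantitative approach, you must actually prove the missing lower bound, and the CM example shows this cannot be done without further hypotheses on $f$, $g$, $l$, $m$.
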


\begin{theorem}\label{thm3}
Let $f,g \in S_{k_i}$ (with $i = 1,2$ respectively) be normalized Hecke eigenform with Fourier coefficients $a(n)$ and $b(n)$ respectively. Then for $ j= 2, 3, 4 $ the sparse sequences $\{a(n^j)\}_{n\geq1}$ and $\{b(n^j)\}_{n\geq1}$ have infinitely many terms of same as well as of different signs simultaneously.
\end{theorem}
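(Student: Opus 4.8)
The plan is to reduce the problem to primes and then feed it into Sato--Tate equidistribution. First I would note that it suffices to exhibit infinitely many primes $p$ for which $a(p^{j})$ and $b(p^{j})$ have the same sign and infinitely many for which they have opposite signs --- in fact the argument will produce all four sign patterns, each along a set of primes of positive density --- since the sequences indexed by all $n\ge 1$ then inherit the conclusion. Writing $a(p)=2p^{(k_{1}-1)/2}\cos\theta_{p}$ and $b(p)=2p^{(k_{2}-1)/2}\cos\phi_{p}$ with $\theta_{p},\phi_{p}\in[0,\pi]$ (Deligne's bound), multiplicativity together with the Hecke recursion gives $a(p^{j})=p^{j(k_{1}-1)/2}\,U_{j}(\cos\theta_{p})$ and $b(p^{j})=p^{j(k_{2}-1)/2}\,U_{j}(\cos\phi_{p})$, where $U_{j}$ is the $j$-th Chebyshev polynomial of the second kind. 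Since $U_{j}(\cos\theta)=\sin\bigl((j+1)\theta\bigr)/\sin\theta$ and $\sin\theta\ge0$ on $[0,\pi]$, the sign of $a(p^{j})$ is that of $\sin\bigl((j+1)\theta_{p}\bigr)$ and the sign of $b(p^{j})$ that of $\sin\bigl((j+1)\phi_{p}\bigr)$. For each $j\in\{2,3,4\}$ the set $I_{j}^{+}=\{\theta\in(0,\pi):\sin((j+1)\theta)>0\}$ and its complement $I_{j}^{-}$ in $(0,\pi)$ are each a non-empty finite union of open intervals, hence each has strictly positive Sato--Tate measure $d\mu=\tfrac{2}{\pi}\sin^{2}\theta\,d\theta$.

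Next I would invoke the joint Sato--Tate equidistribution theorem for $(f,g)$. Since $f$ and $g$ have level one they are non-CM, and since $k_{1}\neq k_{2}$ they are not twist-equivalent (their symmetric-power $L$-functions have distinct archimedean factors, so $\mathrm{sym}^{j}f\not\cong\mathrm{sym}^{j}g$; more generally a level-one newform has no non-trivial twist of level one). Hence the points $(\theta_{p},\phi_{p})$ are equidistributed in $[0,\pi]^{2}$ with respect to $\mu\times\mu$; the analytic input here is the potential automorphy of the symmetric-power tensor products $\mathrm{sym}^{a}f\otimes\mathrm{sym}^{b}g$ and the resulting holomorphy and non-vanishing on $\Re s=1$ of their $L$-functions.

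Granting this, for each $(\varepsilon_{1},\varepsilon_{2})\in\{+,-\}^{2}$ the open box $I_{j}^{\varepsilon_{1}}\times I_{j}^{\varepsilon_{2}}$ has positive $(\mu\times\mu)$-measure, so the set of primes $p$ with $\theta_{p}\in I_{j}^{\varepsilon_{1}}$ and $\phi_{p}\in I_{j}^{\varepsilon_{2}}$ has positive lower density, in particular is infinite; taking $\varepsilon_{1}=\varepsilon_{2}$ gives infinitely many $n=p$ with $a(n^{j})$, $b(n^{j})$ of the same sign, and $\varepsilon_{1}\neq\varepsilon_{2}$ gives infinitely many with opposite signs, which establishes Theorem~\ref{thm3}. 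The step I expect to be the main obstacle to write down carefully is precisely this joint Sato--Tate input: one must confirm that it is available for two level-one eigenforms of distinct weights by appealing to the relevant potential automorphy results (Barnet-Lamb--Gee--Geraghty--Taylor and related work) and their standard analytic consequences, and verify that non-twist-equivalence --- which $k_{1}\neq k_{2}$ forces here --- excludes any algebraic relation confining $(\theta_{p},\phi_{p})$ to a lower-dimensional subset of $[0,\pi]^{2}$. The remaining ingredients (the Chebyshev identity and the description and positivity of $I_{j}^{\pm}$) are elementary.

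Alternatively, for $j=2$ one can bypass joint equidistribution with a Rankin--Selberg dichotomy in the style of Ram Murty \cite{Ram}. Set $\lambda_{f}(p^{j})=a(p^{j})/p^{j(k_{1}-1)/2}$ and likewise for $g$. As $\mathrm{sym}^{j}f\not\cong\mathrm{sym}^{j}g$, the series $\sum_{p}\lambda_{f}(p^{j})\lambda_{g}(p^{j})\,p^{-1}$ converges; on the other hand, expanding $\lambda_{f}(p^{j})^{2}=\sum_{m=0}^{j}\lambda_{f}(p^{2m})$ (and similarly for $g$) gives $\sum_{p\le x}\bigl(\lambda_{f}(p^{j})\lambda_{g}(p^{j})\bigr)^{2}p^{-1}=\sum_{p\le x}p^{-1}+O(1)\to\infty$, the $O(1)$ absorbing all cross terms $\mathrm{sym}^{2m}f\times\mathrm{sym}^{2n}g$ with $(m,n)\neq(0,0)$, none of which contains the trivial representation because $k_{1}\neq k_{2}$. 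Since $\bigl|\lambda_{f}(p^{j})\lambda_{g}(p^{j})\bigr|\le(j+1)^{2}$ and $t^{2}\le(j+1)^{2}|t|$ whenever $|t|\le(j+1)^{2}$, if $a(p^{j})b(p^{j})$ had a fixed sign for all large $p$ the divergent sum would be at most $(j+1)^{2}$ times the convergent one, a contradiction. For $j=2$ this uses only the classical automorphy of $\mathrm{sym}^{2}$ and $\mathrm{sym}^{4}$ (Gelbart--Jacquet, Kim), but for $j=3,4$ squaring reintroduces $\mathrm{sym}^{6}$ and $\mathrm{sym}^{8}$, which sends one back to potential automorphy; that is why I would present the Sato--Tate route as the primary one.
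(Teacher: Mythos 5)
Your proof is essentially correct, but it follows a genuinely different route from the paper's. The paper never touches equidistribution: it sets $h=x^{1-\beta_j+2\epsilon}$ and plays two mean-value estimates against each other, namely $\sum_{n\le x}a(n^j)\ll_{f,\epsilon}x^{\alpha_j+\epsilon}$ with $\alpha_j=1/2,3/4,7/9$ (Fomenko, L\"u) and $\sum_{n\le x}a(n^j)b(n^j)=C_jx+O(x^{1-\beta_j+\epsilon})$ with $\beta_j=2/11,1/9,2/27$ (extracted from Lao--Sankaranarayanan); a constant sign on $(x,x+h]$ would force $x^{1-\beta_j+2\epsilon}\ll x^{\alpha_j+3\epsilon}$, impossible since $\alpha_j<1-\beta_j$. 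What your route buys: it produces all four sign patterns along sets of primes of positive density, it works for every $j$ once the joint input is granted, and it does not depend on any main-term constant being nonzero. What it costs: the joint Sato--Tate theorem for a non-twist-equivalent pair is a far deeper input than anything the paper invokes (available only through potential automorphy/symmetric power functoriality), so it must be sourced with care exactly as you anticipate; your fallback Rankin--Selberg dichotomy \`a la Ram Murty is fully classical for $j=2$ (Gelbart--Jacquet and Kim suffice) and is the cleanest unconditional part of your plan. Conversely, the paper's argument is short and elementary granted the quoted estimates and localizes the sign changes to short intervals $(x,x+x^{1-\beta_j+2\epsilon}]$; but it stands or falls with $C_j\neq 0$, and for two \emph{distinct} eigenforms of different weights the Dirichlet series $\sum_n\lambda_f(n^j)\lambda_g(n^j)n^{-s}$ is governed by $L(s,\mathrm{sym}^jf\times\mathrm{sym}^jg)$, which has no pole at $s=1$ precisely because of the same non-twist-equivalence you exploit, so the main term the paper relies on degenerates when $f\ne g$. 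In that sense your approach is not merely an alternative but avoids the one step the paper leaves unexamined.
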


\section{Proof of Theorem \ref{thm2}}
We begin by stating and sometimes outlining a proof of a  series of lemmas. Let us define the counting function
\begin{equation}\label{eq1}
I_l(n):= 
\begin{cases}
&1 ~~~~~\text{~~~~~~if $n\equiv l \pmod m$} \\
& 0~~~~~ \text{~~~~~~otherwise.}
\end{cases}
\end{equation}
with $m$ and $l$ as before. 
\begin{lemma}\label{lem1}
Let $\chi$ be any Dirichlet character modulo N. Consider $f\in S_k(N,\chi)$ having Fourier coefficients $a(n)\in \mathbb{R}$ and  
$l, m $ be co-prime positive integers. If 
$ g(z) := \sum\limits_{n=1}^{\infty}I_l(n)a(n)q^n $, 
then $g\in S_k(\Gamma_1(Nm^2))$.
\end{lemma}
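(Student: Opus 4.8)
The plan is to realise $g$ as an explicit finite linear combination of twists of $f$ by Dirichlet characters modulo $m$, and then to invoke the classical behaviour of such twists under level raising. First I would use the orthogonality relations for Dirichlet characters modulo $m$: since $(l,m)=1$, for every $n\ge 1$ one has
\[
I_l(n)=\frac{1}{\varphi(m)}\sum_{\psi\bmod m}\overline{\psi(l)}\,\psi(n),
\]
the sum running over all Dirichlet characters $\psi$ modulo $m$ (when $(n,m)>1$ both sides vanish, using that $(l,m)=1$ forces $n\not\equiv l\pmod m$; when $(n,m)=1$ the two sides agree by orthogonality). Writing $f_\psi(z):=\sum_{n\ge1}\psi(n)a(n)q^n$ for the $\psi$-twist of $f$, this gives at once
\[
g(z)=\frac{1}{\varphi(m)}\sum_{\psi\bmod m}\overline{\psi(l)}\,f_\psi(z).
\]

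Next I would quote the standard twisting lemma for modular forms: if $f\in S_k(N,\chi)$ and $\psi$ is a Dirichlet character modulo $m$, then $f_\psi\in S_k\bigl(\Gamma_0(Nm^2),\chi\psi^2\bigr)$. One proves this by writing, for $\psi$ primitive, $f_\psi(z)=c_\psi\sum_{a\bmod m}\overline{\psi(a)}\,f\!\left(z+\tfrac am\right)$ with a nonzero Gauss-sum constant $c_\psi$ (and an obvious modification when $\psi$ is imprimitive), and then verifying directly the transformation law under $\Gamma_0(Nm^2)$ together with holomorphy and vanishing at the cusps; the denominator $m$ appearing in the translations $z\mapsto z+\tfrac am$ is exactly what forces the factor $m^2$ in the level. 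Since the conductor of $\chi\psi^2$ divides $Nm^2$, and since $\Gamma_1(Nm^2)\subset\Gamma_0(Nm^2)$ with the nebentypus restricting trivially to $\Gamma_1(Nm^2)$, each $f_\psi$ in fact lies in $S_k(\Gamma_1(Nm^2))$.

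Finally, $g$ is a finite $\mathbb{C}$-linear combination of elements of the vector space $S_k(\Gamma_1(Nm^2))$, hence $g\in S_k(\Gamma_1(Nm^2))$; one may add that $g$ has real Fourier coefficients, since $I_l(n)\in\mathbb{Q}$ and $a(n)\in\mathbb{R}$. The only genuine content here is the twisting lemma, so that is the step I would expect to spell out most carefully; the passage through characters and the group inclusion $\Gamma_1\subset\Gamma_0$ are routine bookkeeping.
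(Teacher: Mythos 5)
Your proposal is correct and follows essentially the same route as the paper: decompose $I_l(n)$ via orthogonality of Dirichlet characters modulo $m$, write $g$ as a finite linear combination of the twists $f_\psi$, and invoke the standard fact that $f_\psi\in S_k(Nm^2,\chi\psi^2)\subset S_k(\Gamma_1(Nm^2))$ (the paper simply cites Koblitz for this twisting lemma, whereas you sketch its proof via Gauss sums, but the argument is the same).
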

\begin{proof}
Let $\{\psi_r: 1\leq r\leq \phi(m)\}$ be the group of $\phi(m)$ Dirichlet characters modulo $m$ where $\phi$ is the Euler phi function. Now applying the orthogonality relation of characters we have,
\begin{eqnarray}
g(z)
&=&\sum_{n=1}^{\infty}\bigg(\frac{1}{\phi(m)} \sum_{r=1}^{\phi(m)}\psi_r(n)\overline{\psi_r}(l)\bigg)a(n)q^n \nonumber\\
&=&\sum_{r=1}^{\phi(m)}\frac{\overline{\psi_r}(l)}{\phi(m)}
\sum_{n=1}^\infty \psi_r(n)a(n)q^n\nonumber\\
&=&\sum_{r=1}^{\phi(m)}\alpha_{\psi_r}f_{\psi_r}
\end{eqnarray}
where $\alpha_{\psi_r}=\frac{\overline{\psi_r}(l)}{\phi(m)}$ and $f_{\psi_r}=\sum\limits_{n=1}^\infty \psi_r(n)a(n)q^n$. Now 
$f_{\psi_r} \in S_k(Nm^2,\chi\psi_{r}^2) \subset S_k(\Gamma_1(Nm^2))$ for all $r$ with $1\leq r \leq \phi(m)$. ( cf. \cite{Kob}). Hence $g \in S_k(\Gamma_1(Nm^2))$.
\end{proof}

\begin{lemma}\label{lem2}
Suppose $f\in S_k(\Gamma_1(N))$. Then its associated $L$ function $L(s,f)$ has an analytic continuation to the full s-plane.
\end{lemma}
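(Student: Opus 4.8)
The plan is to express $L(s,f)$, up to elementary factors, as the Mellin transform of $f$ along the imaginary axis and to exploit two features of cusp forms: their exponential decay at the cusp $i\infty$, and the modular transformation law under the Fricke involution.

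First I would recall Hecke's bound $a(n)\ll_f n^{k/2}$ on the Fourier coefficients of a cusp form, which already guarantees that $L(s,f)=\sum_{n\ge1}a(n)n^{-s}$ converges absolutely and is holomorphic in the half-plane $\Re(s)>k/2+1$. For such $s$ one has the integral representation
\[
\Lambda(s,f):=(2\pi)^{-s}\Gamma(s)L(s,f)=\int_0^{\infty}f(iy)\,y^{s}\,\frac{dy}{y},
\]
obtained by integrating the Fourier expansion of $f$ term by term against $y^{s-1}$ and using $\int_0^\infty e^{-2\pi n y}y^{s-1}\,dy=(2\pi)^{-s}\Gamma(s)n^{-s}$; the interchange of sum and integral is legitimate by absolute convergence.

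Next I would split the integral at $y=N^{-1/2}$. The tail $\int_{N^{-1/2}}^\infty f(iy)y^{s-1}\,dy$ is entire in $s$, because $f(iy)=O(e^{-2\pi y})$ as $y\to\infty$ forces local uniform convergence for every $s\in\mathbb{C}$. For the lower piece $\int_0^{N^{-1/2}}f(iy)y^{s-1}\,dy$ I would invoke the Fricke involution $W_N=\begin{pmatrix}0&-1\\N&0\end{pmatrix}$, which normalizes $\Gamma_1(N)$; hence $g:=f|_kW_N$ again lies in $S_k(\Gamma_1(N))$ and satisfies $f(-1/(Nz))=N^{k/2}z^{k}g(z)$. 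Substituting $y\mapsto 1/(Ny)$ and applying this relation converts $\int_0^{N^{-1/2}}f(iy)y^{s-1}\,dy$ into a constant multiple of $N^{-s}\int_{N^{-1/2}}^\infty g(iy)\,y^{k-s-1}\,dy$, which is again entire in $s$ since the cusp form $g$ also decays exponentially at $i\infty$.

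Adding the two contributions shows that $\Lambda(s,f)$ continues to an entire function of $s$. Since $(2\pi)^{s}$ is entire and nowhere vanishing and $1/\Gamma(s)$ is entire, it follows that $L(s,f)=(2\pi)^{s}\Lambda(s,f)/\Gamma(s)$ is entire, which is the assertion. The one point requiring care is the passage from the behavior of $f$ near $y=0$ to behavior at $i\infty$: one must check that $W_N$ genuinely normalizes $\Gamma_1(N)$ — a brief verification of the congruence conditions on $W_N\gamma W_N^{-1}$ — so that $g=f|_kW_N$ is still a cusp form on $\Gamma_1(N)$ and hence decays exponentially; the rest is routine manipulation of absolutely convergent integrals.
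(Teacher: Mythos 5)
Your argument is correct: the Hecke bound gives a half-plane of convergence, the Mellin integral for $(2\pi)^{-s}\Gamma(s)L(s,f)$ splits at $y=N^{-1/2}$, the upper piece is entire by exponential decay of $f$ at $i\infty$, and the lower piece becomes entire after the substitution $y\mapsto 1/(Ny)$ once one checks (as you do) that the Fricke involution $W_N$ normalizes $\Gamma_1(N)$ so that $f|_kW_N$ is again a cusp form. The paper simply cites Diamond--Shurman, Theorem 5.10.2, for this lemma, and your proof is essentially the standard Hecke-type argument underlying that reference, so there is nothing to add.
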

\begin{proof}
We refer [\cite{Dia}, Theorem 5.10.2] for the proof.
\end{proof}

\begin{lemma}[Landau \cite{Lan}]\label{lem4}
Suppose that $d(n)\geq 0$  for all but finitely many $n$'s and that the Dirichlet series 
$$ 
\Psi(s) = \sum_{n\geq 1} \frac{d(n)}{n^{s}} 
$$ 
has finite abscissa of convergence $\sigma_c$. Then $ \Psi(s)$ has a singularity on the real axis at the point $s=\sigma_c$ .
\end{lemma}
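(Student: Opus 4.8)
The plan is to imitate Pringsheim's theorem for power series: argue by contradiction, exploiting that the hypothesis $d(n)\ge 0$ forces the Taylor coefficients of $\Psi$ at a real point just to the right of $\sigma_c$ to have predictable signs, so that the Taylor series, after an interchange of two positive-term sums, collapses back into a Dirichlet series that converges to the \emph{left} of $\sigma_c$ — contradicting the definition of the abscissa of convergence.

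First I would reduce to the case $d(n)\ge 0$ for every $n$. Let $F=\{n: d(n)<0\}$, a finite set by hypothesis, and write $\Psi(s)=P(s)+\Psi_0(s)$ where $P(s)=\sum_{n\in F}d(n)n^{-s}$ is a Dirichlet polynomial, hence an entire function, and $\Psi_0(s)=\sum_{n\notin F}d(n)n^{-s}$ has nonnegative coefficients. Subtracting an entire function changes neither the abscissa of convergence nor the set of singularities, so $\Psi_0$ also has abscissa of convergence $\sigma_c$ and it suffices to exhibit a singularity of $\Psi_0$ at $\sigma_c$; thus I may assume $d(n)\ge 0$ for all $n$ throughout.

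Now suppose, for contradiction, that $\Psi$ is holomorphic on some disc $D(\sigma_c,\varepsilon)$. Since $\Psi$ is already holomorphic on $\{\Re s>\sigma_c\}$, it is holomorphic on the open set $U=D(\sigma_c,\varepsilon)\cup\{\Re s>\sigma_c\}$. Fix a real point $a=\sigma_c+\eta$ with $0<\eta<\varepsilon/2$; a short distance estimate gives $D(a,\varepsilon/2)\subset U$, so the Taylor expansion of $\Psi$ about $a$ converges on the disc $D(a,\varepsilon/2)$, whose radius strictly exceeds $a-\sigma_c=\eta$. Differentiating the Dirichlet series term by term (legitimate for $\Re s>\sigma_c$) gives $\Psi^{(k)}(a)=(-1)^k\sum_n d(n)(\log n)^k n^{-a}$, so that $(b-a)^k\Psi^{(k)}(a)=(a-b)^k\sum_n d(n)(\log n)^k n^{-a}\ge 0$ for every real $b<a$. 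Choosing a real $b$ with $a-\varepsilon/2<b<\sigma_c$ and evaluating the Taylor series at $b$, every term of the resulting double sum is nonnegative, so Tonelli's theorem permits interchanging the two summations:
\[
\Psi(b)=\sum_{k\ge0}\frac{(a-b)^k}{k!}\sum_n\frac{d(n)(\log n)^k}{n^{a}}=\sum_n\frac{d(n)}{n^{a}}\sum_{k\ge0}\frac{\big((a-b)\log n\big)^k}{k!}=\sum_n\frac{d(n)}{n^{a}}\,n^{a-b}=\sum_n\frac{d(n)}{n^{b}}.
\]
Hence $\sum_n d(n)n^{-b}$ converges with $b<\sigma_c$; because the coefficients are nonnegative this forces absolute convergence of $\sum_n d(n)n^{-s}$ on $\Re s\ge b$, contradicting the meaning of $\sigma_c$. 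Therefore $\Psi$ is not holomorphic at $s=\sigma_c$, i.e.\ it has a singularity there.

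The only points that need genuine care are the ones labelled "elementary" above: that holomorphy \emph{at} $\sigma_c$ together with holomorphy on the open half-plane really does produce a Taylor disc about $a$ reaching strictly past $\sigma_c$ (this is where finiteness of $\sigma_c$ enters), the term-by-term differentiation of the Dirichlet series, and the Tonelli interchange in the displayed computation. None of these is difficult, but each should be stated explicitly rather than waved through.
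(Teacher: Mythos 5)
Your proof is correct and complete: the reduction to nonnegative coefficients via subtracting the entire Dirichlet polynomial over the finite exceptional set, the choice of the expansion point $a=\sigma_c+\eta$ with $\eta<\varepsilon/2$ so that the Taylor disc reaches past $\sigma_c$, the sign analysis of $\Psi^{(k)}(a)$, and the Tonelli interchange are all handled properly, and the conclusion that the series converges at some real $b<\sigma_c$ contradicts the definition of the abscissa of convergence. The paper does not prove this lemma at all — it simply cites Landau's 1905 paper — and what you have written is precisely the classical Pringsheim-style argument that underlies that reference, so there is nothing to reconcile between the two.
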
 
%

\subsection{Proof of Theorem \ref{thm2}}

Consider,
$$
f_{1}(z)=\sum_{n\geq 1}I_l(n)a(n)q^{n}
$$
and   
$$
g_{1}(z)=\sum_{n\geq 1}I_l(n)b(n)q^{n},
$$ 
with $I_l(n)$ is as in \eqref{eq1}.
Applying Lemma \ref{lem1} we have $f_1\in S_{k_1}(\Gamma_1(Nm^2))$ and $g_1 \in S_{k_2}(\Gamma_1(Nm^2))$. 
Let 
\begin{equation}
R_{f_{1},g_{1}}(s) = \sum_{n\geq 1}\dfrac{I_l(n)a(n)b(n)}{n^s}
\end{equation} 
be the Rankin-Selberg Dirichlet series attached to $ f_1 $ and $g_1$.
Let us take $k_1>k_2$ without any loss of generality and set, 
\begin{equation}\label{eq6}
R^*_{f_1,g_1}(s)=(2\pi)^{-2s}\Gamma(s)\Gamma(s-k_2+1)\zeta_{Nm^2}(2s-(k_1+k_2)+2)R_{f_1,g_1}(s)
\end{equation}
where, 
\begin{equation*}
\zeta_{Nm^2}(s)=\prod_{p\vert Nm^2}(1-p^{-s})\zeta(s).
\end{equation*}\\
It is well known that $R^*_{f_1,g_1}(s)$ extends to an entire function on $\mathbb{C}$ (cf. \cite[proposition 4.1.5]{Lei},\cite{Bump} ). Since $\Gamma(z)$ has no zeros for all $z \in \mathbb{C}$, it follows that $\zeta_{Nm^2}(2s-(k_1+k_2)+2)R_{f_1,g_1}(s)$ extends to an entire function.
Now by Rankin-Selberg method one has the following integral representation:
\begin{equation*}
R_{f_1,g_1}(s)= \int_{\Gamma_0(A)\setminus\mathfrak{H}} f_1(z)g_1(z)E^*_{k_1-k_2}(z;s-k_1+1)y^{k_1-2}dxdy, 
\end{equation*}\\
where, $A=Nm^2$, $k$ is a non-negative integer and 
\begin{equation*}
E^*_k(z;s):= \pi^{-s}\Gamma(s+k)E_k(z;s).
\end{equation*} 
For $z\in \mathfrak{H},s\in \mathbb{C}$ and $\sigma\gg 1$;
\begin{equation*}
E_k(z;s):= \zeta(2s+k) \sum_{\gamma=(\begin{array}{c} . \ .\\c \ d \end{array})\in \Gamma_0(A)_\infty \setminus \Gamma_0(A)}\frac{y^s}{(cz+d)^k \vert cz+d \vert^{2s}} \ \  
\end{equation*}
is the non-holomorphic Eisenstein series of weight $k>0$ with level $A$ for the cusp $i\infty$ and 
\begin{equation*}
\Gamma_0(A)_\infty = \bigg \lbrace \pm 1 \bigg ( \begin{array}{c} 1 \ m\\0 \ 1 \end{array} \bigg ) \vert m\in \mathbb{Z} \bigg \rbrace .
\end{equation*}
It is worthwhile to note that for $k>0$ the function $E^*_k(z;s)$ extends to an entire function in $s$ (e.g.\cite{Miy}, Cor. 7.2.11, p. 286).

If possible let us assume that the sequence $\{a(n)b(n)\}_{n\equiv l \pmod m}$ has all but finitely many terms $ \geq 0 $ for $(l,m)=1$. 
If we denote the coefficients of  $\zeta_{Nm^2}(2s-(k_1+k_2)+2)R_{f_1,g_1}(s)$ by $e(n)$ for all $n\in \mathbb{N}$. Then  $e(1)\neq 0$ and by assumption we have that $\{I_l(n)a(n)b(n)\}_{n\geq 1}$ has all but finitely many terms $\geq 0$. 

Now by Lemma \ref{lem4} we have, 
\begin{equation} \label{eq7} \sum_{n\geq 1}e(n)n^{-s} 
\end{equation}
(for $\sigma \gg 1$) must either have a singularity at the real point of its abscissa of convergence or must converge  for all $s\in \mathbb{C}$. The first alternative is excluded as \eqref{eq7} has holomorphic continuation to $\mathbb{C}$.

Now from \eqref{eq6} for $\sigma \gg 1$, we have that 
\begin{equation*}
(2\pi)^{-2s}\Gamma(s)\Gamma(s-k_2+2)\sum_{n\geq 1}e(n)n^{-s}  
\end{equation*} 
extends to an entire function. Since $\Gamma(s)\Gamma(s-k_2+2)$ has poles at $s=0,-1,-2,....$, it follows that $\sum_{n\geq 1} e(n)n^{-s}$ must vanish at these points.  We thus obtain a system of linear equations for $e(n)$ whose determinant is a Vandermonde determinant and thus is  non-zero. It follows then that $e(n)=0$ for all $n\geq 1$, which contradicts the fact that $e(1)\neq 0$. Thus we conclude that the subsequences $(a(n))_{n\equiv l \pmod m}$ and $(b(n))_{n\equiv l\pmod m}$ have infinitely many terms of different signs simultaneously for $(l,m)=1$.
     
Now we assume that the sequence $ \{a(n)b(n)\}_{n\equiv l\pmod m}$ has all but finitely many terms  $\leq 0$ for $(l,m)=1$. Let
\begin{equation*} 
 e(n) = - \ \text{coefficient of} \  \zeta_{Nm^2}(2s-(k_1+k_2)+2)R_{f_1,g_1}(s).
\end{equation*}
Hence $e(1) \neq 0$ and $e(n)\geq 0$ for all but finitely many $n$. Applying Lemma \ref{lem4} we can conclude that $\sum_{n=1}^\infty e(n)n^{-s}$ must converge for all $s \in \mathbb{C}$. Now we can argue previously to conclude $e(n)=0$ for all $n \geq 1$ which contradicts the nonvanishing of $e(1)$. Thus the subsequences $(a(n))_{n\equiv l\pmod m}$ and $(b(n))_{n\equiv l\pmod m}$ have infinitely many terms of same signs simultaneously for $(l,m)=1$. This completes the proof.

\section{Proof of Theorem \ref{thm3}}

Let us denote $\alpha_j= 1/2, 3/4, 7/9$  and $\beta_j= 2/11, 1/9, 2/27$ respectively for $j= 2, 3, 4$. Let
$$
f(z)=\sum_{n\geq 1}a(n)e^{2\pi inz} \in S_{k_1}
$$
and   
$$
g(z)=\sum_{n\geq 1}b(n)e^{2\pi inz} \in S_{k_2}
$$ 
be normalized Hecke eigenform. Here we want to compare the sparse sequences $\{a(n^j)\}_{n\geq1}$ and $\{b(n^j)\}_{n\geq1}$ simultaneously for $j=2, 3, 4$. The following two lemmas will be required to prove the theorem.

\begin{lemma}\label{lem5}
Let $f(z)=\sum\limits_{n\geq 1}a(n)e^{2\pi inz} \in S_k$ be normalized Hecke eigenform. Then for any $\epsilon > 0$ and $j=2, 3, 4$ we have,
$$\sum_{n\leq x}a(n^j)\ll_{f,\epsilon} x^{\alpha_j+\epsilon}$$
where  $\alpha_j$'s are defined above. 
\end{lemma}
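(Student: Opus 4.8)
The plan is to prove Lemma~\ref{lem5} by reducing the size of $\sum_{n\le x}a(n^j)$ to the size of a suitable automorphic $L$-function in the critical strip, and then invoking known subconvexity bounds. Recall that for a normalized Hecke eigenform the multiplicativity of the coefficients together with the Hecke relations express $a(n^j)$ in terms of the coefficients of the symmetric power lifts: writing $a(p)=\alpha_p+\beta_p$ with $\alpha_p\beta_p=1$, one has $a(p^j)=\sum_{i=0}^{j}\alpha_p^{i}\beta_p^{j-i}$, and hence $a(n^j)$ is a fixed integer linear combination of the Dirichlet coefficients of the symmetric power $L$-functions $L(s,\mathrm{Sym}^i f)$ for $0\le i\le j$. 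The key identity I would record first is therefore a factorization of the Dirichlet series $\sum_{n\ge1}a(n^j)n^{-s}$ as a product of $L(s,\mathrm{Sym}^i f)^{\pm 1}$ (for $j=2$: $L(s,\mathrm{Sym}^2 f)\zeta(s)^{-1}$; for $j=3,4$ the analogous finite products, all absolutely convergent in a half-plane and meromorphic by Gelbart--Jacquet, Kim--Shahidi, Kim), with only a harmless Euler factor discrepancy at ramified primes.

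Next I would set up a standard Perron-type contour argument. The partial sum $\sum_{n\le x}a(n^j)$ is extracted from the Dirichlet series by a truncated Perron formula; shifting the contour to the left of $\Re(s)=1$ (or to the appropriate edge of the critical strip, using that for $j=2,3,4$ the relevant symmetric power $L$-functions are holomorphic and nonvanishing on $\Re(s)=1$, so there is no main term contribution past the pole structure being fully accounted for by the $\zeta$-factor in the $j=2$ case — and one checks the residue is absorbed or simply gives a lower-order term since $f$ is cuspidal) one balances the contribution of the horizontal and vertical segments. The size of the vertical integral is controlled by convexity or subconvexity bounds for $L(s,\mathrm{Sym}^i f)$ on vertical lines inside the critical strip, combined with the standard polynomial growth of the relevant products; optimizing the truncation parameter $T$ against $x$ produces the exponent $\alpha_j$. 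The specific values $\alpha_2=1/2$, $\alpha_3=3/4$, $\alpha_4=7/9$ are exactly what comes out of plugging in the best available bounds: for $j=2$ this is essentially the estimate $\sum_{n\le x}a(n^2)\ll x^{1/2+\epsilon}$ coming from the Rankin--Selberg/symmetric-square $L$-function, and for $j=3,4$ one uses the analytic properties of $L(s,\mathrm{Sym}^3 f)$ and $L(s,\mathrm{Sym}^4 f)$ established by Kim--Shahidi and Kim respectively, together with the convexity bound on the critical line.

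The main obstacle is the analytic input about the higher symmetric power $L$-functions: one needs $L(s,\mathrm{Sym}^j f)$ for $j=2,3,4$ to be entire (for cuspidal $f$), to satisfy a functional equation of the expected shape, and to obey at least the convexity bound on the critical line, plus non-vanishing on $\Re(s)=1$ so that the contour can be pushed to that line without picking up spurious poles. For $j=2$ this is classical (Shimura, Gelbart--Jacquet); for $j=3,4$ it rests on the deeper automorphy results of Kim--Shahidi and Kim, which I would cite rather than reprove. A secondary, more routine, difficulty is bookkeeping the bad Euler factors in the factorization $\sum a(n^j)n^{-s}=\prod L(s,\mathrm{Sym}^i f)^{\pm1}$ and verifying that the discrepancy contributes an absolutely convergent Dirichlet series in a half-plane $\Re(s)>1-\delta$, hence does not affect the exponent $\alpha_j+\epsilon$; this is handled by noting $f$ has level one here, so in fact there are no bad primes and the factorization is exact, which streamlines the argument considerably. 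Once these ingredients are in place, the Perron-contour optimization is a standard computation and yields the claimed bound.
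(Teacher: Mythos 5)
The paper offers no proof of this lemma: it simply cites Fomenko \cite{Fom} (for $j=2$) and L\"u \cite{Lu} (for $j=3,4$), and your sketch is essentially a reconstruction of what those references do (factor $\sum a(n^j)n^{-s}$ into symmetric-power $L$-functions, then Perron plus bounds in the critical strip). Two points in your write-up need repair, one of which would actually break the $j=2$ case as written. First, the identity for $j=2$ is $\sum_{n\ge1}a(n^2)n^{-s}=L(s,\mathrm{Sym}^2f)/\zeta(2s)$, not $L(s,\mathrm{Sym}^2f)/\zeta(s)$: this follows from the Hecke relation $a(n)^2=\sum_{d\mid n}a(n^2/d^2)$ combined with $\sum a(n)^2n^{-s}=\zeta(s)L(s,\mathrm{Sym}^2f)/\zeta(2s)$. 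The distinction matters, because $1/\zeta(s)$ cannot be controlled inside the critical strip unconditionally, whereas $1/\zeta(2s)$ is harmless for $\Re(s)>\tfrac12$; with your version the contour shift to $\Re(s)=\tfrac12+\epsilon$ is not justified. Second, the convexity bound does \emph{not} produce $\alpha_2=\tfrac12$: for a degree-$(j+1)$ $L$-function, Perron at $\Re(s)=\tfrac12+\epsilon$ with the convexity estimate $|t|^{(j+1)/4}$ and the optimization $T=x^{2/(j+5)}$ yields the exponent $1-2/(j+5)$, which gives $3/4$ for $j=3$ and $7/9$ for $j=4$ (matching $\alpha_3,\alpha_4$) but only $5/7$ for $j=2$. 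The exponent $\tfrac12$ for $j=2$ requires the stronger square-root--cancellation input coming from the functional equation of the degree-$3$ $L$-function $L(s,\mathrm{Sym}^2f)$ (a Chandrasekharan--Narasimhan/Landau-type argument, which is what Fomenko uses), not mere convexity. The remaining ingredients you list --- cuspidality and holomorphy of $\mathrm{Sym}^jf$ for $j\le4$ via Gelbart--Jacquet, Kim--Shahidi and Kim, the absence of a pole at $s=1$ (hence no main term), and the fact that level one removes all bad Euler factors --- are correct and are exactly what the cited papers rely on.
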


\begin{proof}
We refer \cite{Fom} and \cite{Lu} for the proof.
\end{proof}

\begin{lemma}\label{lem6}
Let $f\in S_{k_1}$ and $g \in S_{k_2}$ be normalized Hecke eigenform with Fourier coefficients $a(n)$ and $b(n)$ respectively. Then for any $\epsilon > 0$ and $j= 2, 3, 4$ we have,
$$\sum_{n\leq x}a(n^j)b(n^j)= C_j x+ O_{f,\epsilon}(x^{1-\beta_j+\epsilon})$$
where $C_j$'s are absolute constants and $\beta_j$'s are defined above. 
\end{lemma}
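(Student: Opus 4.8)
The plan is to establish the asymptotic by analyzing the Rankin--Selberg-type Dirichlet series attached to the sparse sequences, namely
\[
D_j(s) := \sum_{n\geq 1} \frac{a(n^j)b(n^j)}{n^s},
\]
and then extract the main term via a standard contour-shift (or Tauberian) argument. First I would express $D_j(s)$ in terms of known $L$-functions. Since $f$ and $g$ are normalized Hecke eigenforms for the full modular group, the coefficients $a(n)$ and $b(n)$ are multiplicative, and writing $a(p)=\alpha_p+\bar\alpha_p$, $b(p)=\gamma_p+\bar\gamma_p$ with $|\alpha_p|=|\gamma_p|=1$ (by Deligne, after normalizing away the weight), each local factor $\sum_{r\geq 0} a(p^{jr})b(p^{jr})p^{-rs}$ is an explicit rational function in $p^{-s}$ whose numerator and denominator have bounded degree depending only on $j$. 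Collecting these Euler factors shows that $D_j(s)$ equals a finite product of (twisted) symmetric-power $L$-functions $L(s-w,\mathrm{sym}^a f\otimes \mathrm{sym}^b g)$ for $0\le a,b\le j$ (here $w=k_1+k_2-2$ is the total weight normalization), multiplied by a Dirichlet series that converges absolutely in a half-plane strictly to the left of $\Re(s)=1$. The key point is that among these factors exactly one has a simple pole at $s=1$ — the one coming from the "diagonal" contribution $\mathrm{sym}^0 f\otimes\mathrm{sym}^0 g$ paired suitably, i.e. the Riemann zeta factor $\zeta(s)$ — giving rise to the main term $C_j x$, while all the other $L$-factors are entire and nonvanishing on $\Re(s)=1$.

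Next I would record the analytic input needed for the error term. By the work of Kim--Shahidi and subsequent authors, the relevant Rankin--Selberg and symmetric-power $L$-functions for $j\le 4$ are known to be automorphic (or at least to have the analytic continuation, functional equation, and zero-free region on $\Re(s)\ge 1$ required here); this is exactly the regime for which Lemma \ref{lem5} was proved in \cite{Fom} and \cite{Lu}, and the same underlying estimates feed into the Rankin--Selberg convolution. In fact the cleanest route is to avoid re-deriving the convolution and instead invoke Lemma \ref{lem5} together with partial summation: using the Cauchy--Schwarz-type splitting or the elementary convolution identity, one bounds the off-diagonal terms of $\sum_{n\le x}a(n^j)b(n^j)$ against $\sum_{n\le x}a(n^j)$ and $\sum_{n\le x}b(n^j)$, and the exponent $1-\beta_j$ with $\beta_2=2/11,\ \beta_3=1/9,\ \beta_4=2/27$ is precisely what drops out of the $\alpha_j$'s of Lemma \ref{lem5} after optimizing the diagonal/off-diagonal split. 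Then a Perron-formula / contour-shift argument applied to $D_j(s)$ — moving the line of integration past $\Re(s)=1$, picking up the residue $C_j$ of the $\zeta(s)$-factor, and estimating the shifted integral using convexity bounds for the entire $L$-factors — yields
\[
\sum_{n\le x}a(n^j)b(n^j)=C_j x+O_{f,\epsilon}\big(x^{1-\beta_j+\epsilon}\big),
\]
with $C_j$ an absolute constant (the residue, which is positive and independent of $f,g$ after normalization).

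The main obstacle I anticipate is bookkeeping the Euler product of $D_j(s)$ for $j=3,4$: identifying precisely which symmetric-power pieces occur, verifying that only a single simple pole at $s=1$ appears (no higher-order pole, and no pole contributed by a non-automorphic factor), and confirming that every other factor is holomorphic and nonzero on the line $\Re(s)=1$ so that the contour shift is legitimate. For $j=4$ this requires knowing automorphy (or sufficient analytic properties) of $\mathrm{sym}^4$ and its Rankin--Selberg convolutions, which is why the result is restricted to $j\le 4$. Once the factorization and the location of poles are pinned down, the extraction of the main term and the error estimate are routine applications of Perron's formula and standard convexity bounds, exactly as in the proofs cited in Lemma \ref{lem5}.
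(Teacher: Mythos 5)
Your overall skeleton --- factor the Dirichlet series $D_j(s)=\sum_{n\ge1} a(n^j)b(n^j)n^{-s}$ into symmetric-power and Rankin--Selberg $L$-functions times an absolutely convergent Euler product, then apply Perron's formula with a contour shift --- is precisely the method of Lao and Sankaranarayanan \cite{Lao}, which is all the paper itself offers by way of proof (a bare citation to their Theorems 1.1--1.3). So methodologically you are on the same track as the source. There is, however, a genuine gap at the decisive point of your argument: the identification of the polar factor. The theorems of \cite{Lao} concern a \emph{single} form, i.e.\ $\sum_{n\le x}\lambda_f(n^j)^2$; there the factor $\zeta(s)$ with a simple pole at $s=1$ arises because the trivial representation occurs exactly once in $\mathrm{sym}^j\otimes\mathrm{sym}^j$ of the \emph{same} Satake parameters. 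In the present lemma the two forms are distinct (indeed $k_1\neq k_2$ forces $f\neq g$). Writing $\lambda_f(p^j)\lambda_g(p^j)=\lambda_{\mathrm{sym}^jf}(p)\lambda_{\mathrm{sym}^jg}(p)$, the leading Euler factor of $D_j(s)$ is the genuine Rankin--Selberg convolution $L(s,\mathrm{sym}^jf\otimes\mathrm{sym}^jg)$; the tensor product of two irreducible representations of the two distinct Frobenius classes is irreducible for the product group, contains no trivial constituent, and the corresponding $L$-function is entire and nonvanishing at $s=1$ unless $\mathrm{sym}^jf\cong\mathrm{sym}^jg$, which cannot happen for distinct level-one eigenforms. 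The residual factor $H(s)$ converges absolutely for $\Re(s)>1/2$ by Deligne's bound, so $D_j(s)$ is holomorphic at $s=1$ and the constant is $C_j=0$, not a positive residue as you assert. Your claim that ``exactly one factor has a simple pole at $s=1$'' is therefore false in the two-form setting, and the lemma is true only in the degenerate sense $C_j=0$. This is not cosmetic: the application in Theorem \ref{thm3} requires $C_jh\gg x^{1-\beta_j+2\epsilon}$, so the vanishing of $C_j$ is exactly the point on which everything turns.

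Two smaller remarks. First, the ``cleanest route'' you propose --- deducing the convolution asymptotic from Lemma \ref{lem5} by partial summation or a Cauchy--Schwarz splitting --- does not work: Cauchy--Schwarz compares $\sum_{n\le x}a(n^j)b(n^j)$ with the second moments $\sum_{n\le x}a(n^j)^2$ and $\sum_{n\le x}b(n^j)^2$, each of size $\asymp x$, and yields only the trivial bound $O(x)$; there is no way to recover a main term or the exponent $1-\beta_j$ from the individual first moments of Lemma \ref{lem5}. That paragraph should be dropped and replaced by the honest Euler-product computation. Second, once the factorization is corrected, the error term $O(x^{1-\beta_j+\epsilon})$ still needs to be re-derived for the entire degree-$(j+1)^2$ convolution rather than read off from \cite{Lao}; the standard contour-shift with convexity bounds does give an admissible exponent here, but it is a different $L$-function from the one treated in the cited theorems, so ``immediate consequence'' (the paper's phrase) and ``the same underlying estimates feed in'' (yours) both undersell the work required.
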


\begin{proof}
The proof of the lemma is immediate consequence of the proof of Theorem $1.1, 1.2, 1.3$ of \cite{Lao}.
\end{proof}

\subsection{Proof of theorem \ref{thm3}}

Let, $h = h(x) = x^{1-\beta_j+2\epsilon}$, where $\epsilon (>0)$ is sufficiently small. If possible let us assume that for $j=2, 3, 4$, the sparse sequence $\{a(n^j)b(n^j)\}_{n\geq1}$ are of constant sign say positive $ \forall ~ n\in (x,x+h]$.

Now from Lemma \ref{lem6} we have,
\begin{align}\label{eq8}
\sum_{x<n\leq x+h} a(n^j)b(n^j)= C_jh + O_{f,\epsilon}(x^{1-\beta_j+\epsilon}) \gg x^{1-\beta_j+2\epsilon}	
\end{align} 

On the other hand using Lemma \ref{lem5} and Delign's bound (cf. \cite{Del} ) we get,
\begin{align}\label{eq9}
\sum_{x<n\leq x+h} a(n^j)b(n^j)\ll x^{2\epsilon}\sum_{x<n\leq x+h}b(n^j) 
&\ll x^{2\epsilon}\bigg((x+h)^{\alpha_j+\epsilon}+x^{\alpha_j+\epsilon}\bigg)& \nonumber\\
&\ll x^{\alpha_j+3\epsilon}&
\end{align}

Now comparing $1-\beta_j$ and $\alpha_j$ in \ref{eq8} and \ref{eq9}, we can see that the bounds contradict each other. Therefore, atleast one $a(n^j)b(n^j)$ for $n\in (x,x+h]$ are of negetive sign. Hence we can conclude that, for $j=2, 3, 4$ the sparse sequences $\{a(n^j)\}_{n\geq1}$ and $\{b(n^j)\}_{n\geq1}$ have infinitely many terms of different signs simultaneously. By similar argument, one can also show that the sparse sequences have infinitely many terms of same signs simultaneously. This completes the proof of the theorem.

\section*{\bf Acknowledgements} The author would like to express hearty thanks to Professor Kalyan Chakraborty for many fruitful discussions. The author is grateful to the anonymous referee for suggesting essential corrections in an earlier version. This research is partly supported by the Infosys scholarship for senior students.


\begin{thebibliography}{09}

\bibitem{Apo} T. M. Apostol, {\it Introduction to Analytic Number Theory}, Springer International Student Edition, ISBN 81-85015-12-0.

\bibitem{Bump} D. Bump, {\it  The Rankin-Selberg method: a survey}, Number theory, trace formula, and discrete groups, pp. 49--109. Aubert, K.E. et al. (eds.) London Academic Press (1989). 

\bibitem{Del} P. Deligne, {\it La Conjecture de Weil}, I. Inst. Hautes E’tudes Sci. Publ. Math., {\bf 43} (1974), 273-307. MR0340258(49:5013)

\bibitem{Dia} F. Diamond and J. Shurman, {\it A First Course In Modular Forms}, GTM {\bf 228}, Springer-Verlag, 2005.

\bibitem{Fom} O. M. Fomenko, {\it On the behaviour of the automorphic L-functions at the centre of the critical strip}, J. Math. Sci. (N. Y.),
{\bf 118} (2003), 4910-4917.

\bibitem{Gun} S. Gun, W. Kohnen and P. Rath, {\it Simultaneous sign change of Fourier-coefficients of two cusp forms}, Arch. Math. (Basel) {\bf 105(5)} (2015), 413–424.

\bibitem{Koh-1} M. Knopp, W. Kohnen and W.Pritbikin, {\it On the Signs Of Fourier coefficients Of cusp forms}, The Ramanujan Journal,  {\bf 7} (2003), 269--277.


\bibitem{Kob} N. Koblitz, {\it Introduction to elliptic curves and modular forms}, GTM {\bf 97}, Springer-Verlag, 1993.



\bibitem{Koh-2} W. Kohnen and Y.Martin, {\it Sign Changes of Fourier Coefficients of Cusp forms supported on prime power indices},   International Journal Of Number Theory,  {\bf 10(8)} (2014), 1921--1927.

\bibitem{Koh-3} W. Kohnen and J. Sengupta, {\it Signs of Fourier coefficients of two cusp forms of different weights}, Proc. Amer. Math. Soc. {\bf 137(11)} (2009), 3563--3567.

\bibitem{Lan} E. Landau,  Math. Ann., {\bf 61} (1905), 527--550.

\bibitem{Lao} H. Lao and A. Sankaranarayanan, {\it The average behaviour of Fourier coefficients of cusp forms over sparse sequences}, Proc. Amer. Math. Soc., {\bf 8} (2009), 2557-2565.

\bibitem{Lei} A. Lei, D. Loeffler and S. L. Zerbes, {\it Euler systems for Rankin–Selberg convolutions of modular forms}, Ann. of Math. (2) {\bf 180}(2014), no. 2, 653--771, MR3224721.

\bibitem{Lu} G. S. L$\ddot{\text{u}}$, {\it On an open problem of Sankaranarayanan}, Science China Math. {\bf 53} (2010), 1319-1324.

\bibitem{Jab} J. Meher, K. Shankhadhar and G. K. Viswanadham, {\it  A Short Note On Sign Changes},  Proc. Indian Acad. Sci. Math. Sci., {\bf 123} (2013), 315--320.

\bibitem{Miy} T. Miyake,  {\it Modular Forms}, Springer, Berlin-Heidelberg-New York, 1989. { MR1021004}(90M:11062)

\bibitem{Ram} M. Ram Murty, {\it Oscillations of the Fourier coefficients of Modular forms},  Math. Ann. , {\bf 262} (1983), 431--446.

\end{thebibliography}
 \end{document}